\newtheorem{theorem}{Theorem}
\newtheorem{lemma}[theorem]{Lemma}
\theoremstyle{remark}
\newcommand{\be}[1]{\begin{equation}\label{#1}}
\newcommand{\ee}{\end{equation}}
\newcommand{\ba}[1]{\begin{align}\label{#1}}
\newcommand{\ea}{\end{align}}
\newcommand{\ben}{\begin{equation*}}
\newcommand{\een}{\end{equation*}}
\newcommand{\calA}{\mathcal{A}}
\newcommand{\calB}{\mathcal{B}}
\newcommand{\calC}{\mathcal{C}}
\newcommand{\bbN}{\mathbb{N}}
\newcommand{\bbP}{\mathbb{P}}
\newcommand{\bbR}{\mathbb{R}}
\newcommand{\bbZ}{\mathbb{Z}}
\newcommand{\ep}{\varepsilon}
\title{Seven-dimensional forest fires}
\author{Daniel Ahlberg, Hugo Duminil-Copin, Gady Kozma\\ and Vladas Sidoravicius}
\date{\today}
\begin{document}
\maketitle

\begin{abstract}
We show that in high dimensional Bernoulli bond percolation, removing from a thin
infinite cluster a much thinner infinite cluster leaves an
infinite component. This observation has implications for the van den Berg-Brouwer
forest fire process, also known as self-destructive percolation, for dimension high enough.


\end{abstract}

\section{Introduction}

Think about the open vertices of supercritical site percolation as if they were
trees, and about the infinite cluster as a forest. Suddenly a fire
breaks out and the entire forest is cleared. New trees then start
growing randomly. When can one expect a new infinite cluster to
appear? The surprising conjecture in \cite{BB04} is that in the
two-dimensional case, even if the original forest were extremely thin,
still a considerable amount of trees must be added to create a new
infinite cluster. Heuristically, the conjecture claims that the
infinite cluster might occupy a very low proportion of vertices but they sit in a way
that separates the remaining finite clusters by gaps that cannot be
easily bridged. This conjecture is still open. See
\cite{BB04,BBV08,BL09} for connections to other models of forest fires
and more.

Let us define the model formally, in three steps. The model was originally introduced as a site percolation model, but we will define it for bonds, as some of the auxiliary results we need have only been
proved for bond percolation. We are given a
graph $G$, a probability $p\in [0,1]$ (``the original density'') and a
probability $\ep\in[0,1]$ (``the recovered density''). Let $\bbP_p$ be the Bernoulli bond percolation measure on $G$ with parameter $p$.
\begin{enumerate}
\item\label{pg:def}
Assign independent uniformly distributed values from $[0,1]$ to the edges of $G$. Let $\omega_p\in\{0,1\}^{E(G)}$ denote the set of edges with value at most $p$. The configuration $\omega_p$ is distributed as $\bbP_p$, and a {\em cluster} refers to a maximal connected component of edges. It will be of importance below that as $p$ ranges over $[0,1]$, we obtain a simultaneous coupling of Bernoulli configurations on $G$ such that $\omega_{p_1}\subset\omega_{p_2}$ when $p_1\le p_2$.
\item Let $\tilde \bbP_p$ be the law of the configuration $\tilde \omega_p$ constructed as follows: for any edge $e$, 
$$\tilde\omega_p(e)=\begin{cases}\omega_p(e)&\text{ if $e$ is in a
  finite cluster of $\omega_p$,}\\ 0&\text{ otherwise}.\end{cases}$$ 
\item Let $\tilde\bbP_{p,\ep}$ be the law of $\tilde \omega_{p,\ep}$ where $\tilde\omega_{p,\ep}$ is defined as follows: for any edge $e$, $\tilde \omega_{p,\ep}(e)=\max\{\tilde \omega_p(e),\omega'_\ep(e)\}$, where $\omega'_\ep$ is a percolation configuration with edge-weight $\ep$, which is independent of $\omega_p$. 
\end{enumerate}
We can now define our property of interest.

\paragraph{Definition.} Let $p_c(G)$ denote the critical threshold for bond percolation on a graph $G$. We say that $G$ \emph{recovers from fires} if
  for every $\ep>0$, there exists $p>p_c(G)$ such that
  $\tilde\bbP_{p,\ep}$ has an infinite connected component, with
  probability 1. 
We say that $G$ \emph{site-recovers from fires} if the analogous
definitions for site percolation hold.

\medskip

In \cite{BB04} the authors showed that a binary tree site-recovers from
fires and conjectured that $\bbZ^2$ lattice does \emph{not} site-recover from
fires. The binary tree is an example of a non-amenable graph, that is, a graph in which the boundary of a (finite) set of vertices is comparable in size to the set itself. Recovery from fires, both in edge and site sense, was proven in \cite{AST12} for a large class of non-amenable transitive graphs. Our result concerns hyper-cubic lattices.

\begin{theorem}\label{main}For $d$ sufficiently large, $\bbZ^d$
  recovers from fires.
\end{theorem}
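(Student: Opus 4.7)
The plan is to reduce Theorem~\ref{main} to the ``thin minus much thinner'' statement flagged in the abstract, and then to establish this statement in high dimension by combining the mean-field behaviour of Hara--Slade with a renormalisation argument. Set $p':=p+\ep-p\ep$ and consider the simultaneous coupling of step~\ref{pg:def}. Since $\omega_{p'}\setminus\omega_p$ is independent of $\omega_p$ and Bernoulli with parameter $\ep(1-p)$, one checks edge-by-edge that $\tilde\omega_{p,\ep}$ stochastically dominates $\omega_{p'}\setminus\calC$, where $\calC$ denotes the edge set of the infinite cluster of $\omega_p$. Since the existence of an infinite cluster is an increasing event, it therefore suffices to show that for $d$ large and every $\ep>0$ one can choose $p$ with $p_c(\bbZ^d)<p<p'$ so that $\omega_{p'}\setminus\calC$ almost surely contains an infinite connected component.

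I would then take $p=p_c+\eta$ with $\eta\ll\ep$. For $d$ large, the mean-field estimates from lace expansion give $\theta(p)\asymp\eta$, so that $\calC$ has arbitrarily small edge density, while $\theta(p')\asymp\ep$ remains bounded below uniformly in $\eta$. The main geometric step is a dynamical renormalisation on cubes $B_L$ of side $L$: declare $B_L$ \emph{good} when $\omega_{p'}$ admits face-to-face crossings of $B_L$ in each axial direction that avoid $\calC$ and that are compatible with the crossings of each axis-adjacent good box. If the good-box probability can be pushed above the Liggett--Schonmann--Stacey threshold for finitely dependent site percolation on $\bbZ^d$, then good boxes percolate and produce the desired infinite component of $\omega_{p'}\setminus\calC$.

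To bound the good-box probability from below, I would freeze the $[0,p]$-variables (thus revealing $\omega_p$ and $\calC$) and work with the independent Bernoulli$(\ep(1-p))$ sprinkle $\omega_{p'}\setminus\omega_p$. The $\omega_{p'}$-crossings of $B_L$ exist with probability tending to~$1$ as $L\to\infty$ by Grimmett--Marstrand slab-type theorems, while two-point function bounds in the triangle-condition regime allow one to show that a prescribed crossing of length $O(L)$ avoids the sparse $\calC$ with probability close to~$1$ as $\eta\to0$. Taking first $L$ large and then $\eta$ small should thus push the good-box probability past the required threshold.

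The hardest step is the avoidance of $\calC$ by long axial crossings. The set $\calC$ is a \emph{connected} sparse subset of $\bbZ^d$ that could in principle separate $B_L$; in two dimensions planarity forces it to do so, which is precisely the content of the van den Berg--Brouwer conjecture. In high dimension one expects, and must prove, that the extra spatial freedom lets $\omega_{p'}$-crossings detour around $\calC$. Making this avoidance mechanism quantitative demands a careful combination of triangle-condition bounds, uniqueness of the infinite cluster, and the independence of the sprinkle, and constitutes the technical heart of the proof.
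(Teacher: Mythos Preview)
Your reduction to Theorem~\ref{thm:thinthinner} is correct and matches the paper's, and the renormalisation framework is the right shape. But the scheme has a genuine gap at exactly the point you flag as ``the technical heart'', and your proposed inputs do not close it.

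The difficulty is locality. Your good-box event asks that crossings of $B_L$ in $\omega_{p'}$ avoid $\calC=\calC_\infty(\omega_p)$. Since membership in $\calC$ depends on the entire configuration $\omega_p$, this event is \emph{not} finitely dependent, so you cannot invoke Liggett--Schonmann--Stacey. To regain finite dependence you must replace $\calC$ by a local surrogate, the natural one being ``points of $B_L$ connected in $\omega_p$ to distance $L$''. But now $\theta(p)\asymp\eta$ is useless: sending $\eta\to 0$ does not diminish $\bbP_p(0\leftrightarrow\partial B_0(L))$, which converges to the strictly positive critical one-arm probability. What you actually need is the one-arm bound $\bbP_{p_c}(0\leftrightarrow\partial B_0(r))\le Cr^{-2}$ from \cite{KN11}, not the density exponent.

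Even with the one-arm exponent in hand, your full-dimensional boxes fail. In a box of side $L$ the expected number of surrogate points is of order $L^d\cdot L^{-2}=L^{d-2}$, which diverges; a union bound over all ways of deleting that many points then costs $\exp(cL^{d-2}\log L)$, overwhelming the $\exp(-cL)$ coming from supercritical robustness. The paper's remedy is to renormalise inside a two-dimensional \emph{slab} $S_\ell$ of fixed thickness: the relevant box $S_\ell\cap B_x(3L)$ then contains $O(L^2\ell^{d-2})$ sites, so the expected number of surrogate points is $O(\ell^{d-2})$, bounded uniformly in $L$ by some $M$. One can then show (Lemma~\ref{lem:2}) that supercritical percolation in the slab survives the deletion of \emph{any} $M$ points, via a union bound over the merely polynomially many $M$-subsets. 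The parameters are chosen in the order $\ell$, $M$, $L$, $p$, not ``$L$ large then $\eta$ small''. Your sketch is missing both the one-arm input and the slab reduction, and without them the avoidance step cannot be made to work.
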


Here and below, $\bbZ^d$ refers to the $\bbZ^d$ nearest neighbour lattice. The main property of $\bbZ^d$ that we will use is that
$\bbP_{p_c}(0\longleftrightarrow\partial B(0,r))\le Cr^{-2}$ (see below for a
discussion on this condition, and also for the notations). This was
proved in \cite{KN11} based on results of Hara, van der Hofstad \&
Slade \cite{HHS03,H08}. These establish the necessary estimate for $d$
sufficiently large (19 seems to be enough, though this can be
improved) and also for \emph{stretched-out} lattices in $d>6$. The
number $6$ is actually meaningful and is the limit of the technique
involved, lace expansion. Our proof easily extends to the stretched-out
7-dimensional lattice (hence the title of the article), but for
simplicity we will prove the theorem only for nearest-neighbour
percolation in $d$ sufficiently high. In fact, our proof provides further information in the supercritical percolation regime.
Recall the common notation $\calC_\infty(\omega_p)$ for the infinite cluster of edges present in $\omega_p$.

\begin{theorem}\label{thm:thinthinner}For every $\ep>0$ and $d$ sufficiently large, there
  exists $p>p_c$ such that
  $\omega_{p_c+\ep}\setminus\calC_\infty(\omega_p)$ contains an
  infinite cluster almost surely.
\end{theorem}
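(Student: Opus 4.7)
Fix $\ep>0$ and set $p_1=p_c+\ep$. The strategy is a block renormalization at a mesoscopic scale $L$: I want to show that most $L$-boxes simultaneously (i) contain a well-connected piece of $\calC_\infty(\omega_{p_1})$ that matches with its counterparts in neighbouring boxes, and (ii) contain no vertex of $\calC_\infty(\omega_p)$ for some $p\in(p_c,p_1)$ chosen close enough to $p_c$. In any such box the supercritical crossings of $\omega_{p_1}$ avoid every edge of $\calC_\infty(\omega_p)$ (since each such edge has both endpoints in $\calC_\infty(\omega_p)$), so chaining these crossings along an infinite cluster of good boxes on the renormalized lattice produces an infinite component of $\omega_{p_1}\setminus\calC_\infty(\omega_p)$, which is exactly the statement.

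For (i), I would invoke the standard supercritical renormalization machinery at density $p_1>p_c$ (Grimmett--Marstrand or Pisztora flavour): for any $\delta>0$ there is $L$ large enough that an $L$-box is \emph{super-good}---meaning $\calC_\infty(\omega_{p_1})$ has crossings matching with analogous crossings in each neighbouring box---with probability at least $1-\delta$. For (ii) the key input is the hypothesis $\bbP_{p_c}(0\leftrightarrow\partial B(0,r))\le Cr^{-2}$: letting $r\to\infty$ this forces $\theta(p_c)=0$, and right-continuity of $p\mapsto\theta(p)$ at $p_c$ lets me pick $p>p_c$ with $L^d\theta(p)<\delta$. A union bound over the $L^d$ vertices of $B$ then yields $\bbP(\calC_\infty(\omega_p)\cap B\ne\emptyset)<\delta$, so each box is ``good'' (both (i) and (ii) hold) with probability at least $1-2\delta$.

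The main obstacle is that (ii) is a non-local event in $\omega_p$, so good blocks are not $k$-dependent for any finite $k$ and stochastic domination for the block process is not directly available. I would circumvent this by substituting (ii) with the local surrogate: ``no $\omega_p$-open path joins $B$ to $\partial B(x,N)$'' at a larger scale $N\gg L$. If this holds, every $\omega_p$-cluster meeting $B$ stays inside $B(x,N)$, hence is finite and disjoint from $\calC_\infty(\omega_p)$. Using the one-arm bound at scale $N$ (which remains valid up to a constant for $p-p_c$ sufficiently small, by comparing with the critical configuration), the surrogate has failure probability at most $CL^dN^{-2}$, so $N$ polynomial in $L$ (roughly $N\asymp L^{d/2}$) keeps this below $\delta$. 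The surrogate is determined by edges in a ball of radius $N$, hence the block events on the grid of spacing $\sim N$ are genuinely $N$-dependent; Liggett--Schonmann--Stacey then stochastically dominates them from below by a Bernoulli site percolation of parameter arbitrarily close to $1$, which percolates. The parameters must be tuned in this order: first $\delta$, then $L$ (for super-goodness), then $N$ (polynomial in $L$ for the surrogate bound), and finally $p-p_c$ small enough that both $\theta(p)$ and $\bbP_p(0\leftrightarrow\partial B(0,N))$ remain within a constant factor of their critical values.
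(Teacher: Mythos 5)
Your plan shares the renormalization spirit of the paper, but the resolution of the locality problem is where the argument has a genuine gap, and it comes from a scale mismatch. Your crossing structure (i) is built at scale $L$, while your surrogate no-arm event (ii) forces the renormalization grid to spacing $N\gg L$ (you take $N$ polynomial in $L$). On a grid of spacing $N$, adjacent blocks are $N$ apart, and the scale-$L$ crossings in adjacent blocks cannot meet; if you instead build the crossings at scale $N$, they run through the full box of $N^d$ sites, which is no longer protected by a no-arm condition emanating only from $B(x,L)$. And protecting the full $N$-box is hopeless: the expected number of $\omega_p$-arms from a $d$-dimensional box of side $N$ to distance $N$ is of order $N^d\cdot N^{-2}=N^{d-2}$, which diverges. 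The alternative — keep the grid at spacing $L$ and accept a dependency range $N/L$ that grows with $L$ — also fails, because the Liggett--Schonmann--Stacey threshold degrades as the dependency radius grows, and your per-block failure probability is only a fixed small constant.

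The paper circumvents both obstacles with two ideas that are absent from your proposal. First, it works in a two-dimensional slab $S_\ell$ of fixed thickness $\ell$, so the slab box $S_\ell\cap B_x(3L)$ has only $O(L^2\ell^{d-2})$ sites; by the one-arm bound the expected number of sites there with an $\omega_p$-arm to distance $L$ is $O(\ell^{d-2})$, a constant independent of $L$. Second, it does not try to make the box avoid $\calC_\infty(\omega_p)$ at all. Rather, Lemma~\ref{lem:1} shows that with high probability at most $M$ sites of the slab box are connected to distance $L$ in $\omega_p$ (with $M$ fixed once $\ell,\eta$ are), and Lemma~\ref{lem:2} shows that the $\omega_{p_c+\ep}$-crossing/uniqueness structure in the slab box survives the removal of edges at \emph{any} set of $M$ sites, with failure probability decaying in $L$. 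Both events are determined by edges in $B_x(4L)$, so the block process is genuinely $O(1)$-dependent at spacing $L$ and LSS applies cleanly. To repair your argument you would need to import both of these ideas — the slab geometry to make the arm count bounded, and the robustness-to-$M$-deletions lemma in place of outright avoidance.
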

\bigskip

Theorem \ref{main} is clearly a corollary of Theorem
\ref{thm:thinthinner}. Another consequence is that for every $\ep>0$, the critical
probability for percolation on the random graph obtained from $\bbZ^d$
by removing a sufficiently `thin' supercritical percolation cluster, that is $\calC_\infty(\omega_{p_c+\delta})$ for small enough $\delta=\delta(\ep)>0$, is almost
surely at most $p_c+\ep$. Theorem \ref{thm:thinthinner} and the last statement cannot possibly hold for site percolation on $\bbZ^2$, since an infinite cluster cuts space up into finite pieces.



 
\paragraph{Proof sketch.} We will show that for every $\ep>0$, there
exists some $p>p_c$ such that when removing the infinite cluster of
$p$-percolation from $(p_c+\ep)$-percolation, the remainder still
percolates. The proof proceeds by a renormalization procedure. 
\begin{enumerate}
\item We first choose $\ell\in \bbN$ sufficiently
large such that for any $L\ge \ell$, connectivity properties of boxes of size $L^2\times\ell^{d-2}$ in $(p_c+\ep)$-percolation behave  like
$(1-\eta)$-percolation on a coarse grain lattice for some small $\eta$. This is a standard application of
Grimmett-Marstrand \cite{grimar90} and renormalization theory. 
\item We then use the fact
that the one-arm exponent in high dimensions is 2 to note that for any
$L$, only a small number $M$ of vertices in a box of size $L^2\times\ell^{d-2}$ 
can connect to distance $L$ in \emph{critical percolation}.  
\item Picking
$L$ sufficiently large, one can argue that these $M$ points do not alter the connectivity properties of boxes of size $L^2\times\ell^{d-2}$ for $(p_c+\ep)$-percolation. In particular, the coarse grain percolation still behaves like
$(1-\eta)$-percolation even after removing that small number of
vertices. 
\item We now pick $p$ sufficiently close to $p_c$ that the behaviour (for $\omega_p$) at
scale $L$ is not altered by moving from $p_c$ to $p$. Since there are less sites in $\calC_\infty(\omega_p)$ than sites connected to distance $L$ in $\omega_p$, this $p$ gives the result.
\end{enumerate}
Examining this a little shows that what the proof really needs is that
the one-arm exponent is bigger than 1, i.e.\ that
\[
\bbP_{p_c}(0\longleftrightarrow\partial B(0,r))\le r^{-1-c}\qquad c>0.
\]
The number of points removed in the second renormalization step will
in this case no longer be bounded independently of $L$, but would still be too
small to block the cluster of the boxes at scale $\ell$.
This is interesting as it is conjectured to hold also below 6
dimensions. While nothing is proved, simulations hint that it might
hold for $\bbZ^5$ \cite[\S 2.7]{AS94}. On the other hand, let us note that in $\bbZ^3$ this probability is larger than $ cr^{-1}$ (this is well-known but we
are not aware of a precise reference -- compare to
\cite[(3.15)]{vdBK85} and \cite[Theorem 5.1]{Kes82}). Hence, the
approach used here has no hope of working in $\bbZ^3$ (though, of course,
this does not preclude the possibility that $\bbZ^3$ does recover from
fires). We remark that a similar renormalization technique was
recently used in \cite{GHK}, also under the assumption that the
one-arm exponent is bigger than 1.

\paragraph{Notations.} Identify $\bbZ^2$ with the subgraph of $\bbZ^d$
of points with the $d-2$ last coordinates equal to 0. Let
$S_\ell=\{x\in\bbZ^d:|x_i|\le \ell\;\forall i\ge 3\}$ be the two-dimensional slab of
height $2\ell+1$. Recall also the following standard notations: 
For a subgraph $G$ of $\bbZ^d$, we say that $x$ is connected to $y$ in $G$ if they are in the same connected component of $G$. We denote this by $x\stackrel{G}{\longleftrightarrow}y$ or simply $x\longleftrightarrow y$ when the context is clear. We will use the notation $x\longleftrightarrow A$ to denote the fact that $x\longleftrightarrow y$ for some $y$ in $A\subset\bbZ^d$.

Let $||\cdot||_\infty$ be the infinity norm on $\bbR^d$ defined by
$$||x||_\infty=\max\{|x_i|:i=1,\dots,d\}.$$ 
We consider the hypercubic
lattice $\bbZ^d$ for some large but fixed $d$. For $\ell,L>0$, define the ball
$B_x(L)=\{y\in\bbZ^d:||y-x||_\infty\le L\}$ and let $\partial B_x(L)$
be its inner vertex boundary. 

\paragraph{Acknowledgements.} 
During this work Daniel Ahlberg was supported by the Brazilian CNPq through postdoctoral fellowship 150804/2012-1. Gady Kozma's work was partially supported by the Israel Science Foundation. Hugo Duminil-Copin was supported by the NCCR SwissMap, the ERC AG CONFRA, as well as the Swiss FNS. Vladas Sidoravicius thanks Weizmann Institute and the Forschungsinstitut f\"ur Mathematik at ETH, for their hospitality and financial support. The research of Vladas Sidoravicius was supported in part by Brazilian CNPq Grants 308787/2011-0 and 476756/2012-0 and FAPERJ Grant E-26/102.878/2012-BBP. This work was also supported by ESF RGLIS Excellence Network.

\section{Proof}
From now on, $d$ is fixed and large enough. For $x\in \bbZ^2$, let $\calA(x,\ell,L,M)$ be the event that there are
less than $M$ sites $y$ in the $(6L+1)\times (6L+1)\times (2\ell+1)^{d-2}$ box $S_\ell\cap B_x(3L)$ that are connected to a site at distance $L$ from themselves.
Note that we do not assume that this connection is contained in the slab $S_\ell$, the connection may be anywhere in $B_y(L)$. 

\begin{lemma}\label{lem:1}
Let $\eta>0$ and $\ell>0$. There exists $M>0$ such that for any integer $L$, there exists $p>p_c$ such that
$$\bbP_{p}(\calA(x,\ell,L,M))\ge 1-\eta.$$
\end{lemma}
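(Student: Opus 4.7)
The plan is to use the one-arm estimate at criticality to control the expected number of long-arm sites in the slab-box uniformly in $L$, then apply Markov's inequality to choose $M$, and finally lift the bound from $p_c$ to some $p > p_c$ by continuity of local events.

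First, I would observe that the event $\calA(x,\ell,L,M)$ is local: whether a particular site $y \in S_\ell \cap B_x(3L)$ is connected to some site at distance $L$ from itself depends only on the edges inside $B_y(L)$. Thus $\calA$ depends only on edges inside $B_x(4L)$, a finite set, and $p \mapsto \bbP_p(\calA(x,\ell,L,M))$ is a polynomial in $p$, in particular continuous.

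Second, I would use the one-arm estimate $\bbP_{p_c}(y \longleftrightarrow \partial B_y(L)) \le CL^{-2}$ from \cite{KN11} for each $y$ in the slab-box. Let $N = N(x,\ell,L)$ denote the number of $y \in S_\ell \cap B_x(3L)$ with a connection to distance $L$. Since $|S_\ell \cap B_x(3L)| \le (6L+1)^2 (2\ell+1)^{d-2}$, linearity of expectation gives
\[
\bbE_{p_c}[N] \le (6L+1)^2 (2\ell+1)^{d-2} \cdot C L^{-2} \le C_1(\ell),
\]
a constant depending only on $\ell$ (and $d$), crucially independent of $L$. This is the key use of the one-arm exponent being $\ge 2$: the volume grows like $L^2$ and is exactly cancelled.

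Third, I would set $M = M(\eta, \ell) := \lceil 2 C_1(\ell)/\eta \rceil$. By Markov's inequality, $\bbP_{p_c}(N \ge M) \le \eta/2$, so $\bbP_{p_c}(\calA(x,\ell,L,M)) \ge 1 - \eta/2$ for every integer $L$. Finally, having fixed $L$, the continuity of $p \mapsto \bbP_p(\calA(x,\ell,L,M))$ noted above yields some $p > p_c$ with $\bbP_p(\calA(x,\ell,L,M)) \ge 1-\eta$, as required.

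There is no significant obstacle; the argument is essentially a first-moment computation plus continuity. The only subtlety worth flagging is that $M$ must be independent of $L$, which is precisely what the one-arm bound $Cr^{-2}$ provides against a slab-box of volume $O(L^2)$; and that $p$ may (and generally must) depend on $L$, which is allowed by the order of quantifiers in the statement.
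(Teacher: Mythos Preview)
Your proof is correct and follows essentially the same route as the paper: a first-moment bound using the one-arm estimate $\bbP_{p_c}(0\leftrightarrow\partial B_0(L))\le CL^{-2}$ from \cite{KN11}, then Markov's inequality to fix $M$ independently of $L$, and finally passing from $p_c$ to some $p>p_c$ by continuity. The only cosmetic difference is that you make the continuity step explicit (noting $\calA$ depends on finitely many edges, hence $\bbP_p(\calA)$ is a polynomial in $p$), whereas the paper simply asserts that choosing $p$ close enough to $p_c$ suffices.
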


\begin{proof}
By \cite{KN11}, there exists $C>0$ such that (for large enough $d$)
\begin{equation}\label{eq:1}\bbP_{p_c}(0\longleftrightarrow \partial B_0(n))\le \frac C{n^2}.\end{equation}
Choose $M$ in such a way that $\frac {49(2\ell+1)^{d-2} C}M< \eta$. For any integer $L$, Markov's inequality implies
\begin{multline*}\bbP_{p_c}\big[|\{y\in S_\ell\cap
    B_x(3L):y\leftrightarrow \partial B_y(L)\}|\ge M \big]\\
\le
  \frac{1}{M}\sum_{y\in S_\ell\cap B_x(3L)}
  \bbP_{p_c}(y\leftrightarrow \partial B_y(L)).
\end{multline*}
By \eqref{eq:1} and the choice of $M$, the right-hand side is thus
strictly smaller than $\eta$. By choosing $p$ close enough to $p_c$, we obtain that 
\[\bbP_{p}\big[|\{y\in S_\ell\cap
    B_x(3L):y\longleftrightarrow \partial B_y(L)\}|\ge M \big]\le
  \eta.\qedhere
\]
\end{proof}

For a set $S\subset\bbZ^d$, let $\omega^S$ be the configuration obtained from $\omega$ by closing each edge adjacent to some site in $S$. Let $\calB(x,\ell,L,M)$ be the event that for any set $S$ of $M$ sites contained in $B_x(3L)$, $\omega^S$ contains
\begin{itemize}
\item a cluster crossing from $\partial B_x(L)$ to $\partial B_x(3L)$
  contained in the slab $S_\ell$,
\item a unique cluster in the box $S_\ell\cap B_x(3L)$ of radius larger than $L$.
\end{itemize}

\begin{lemma}\label{lem:2}
Let $\eta>0$ and $\ep>0$. There exists $\ell>0$ such that for any $M>0$, there is $L>0$ so that $$\bbP_{p_c+\ep}(\calB(x,\ell,L,M))\ge 1-\eta.$$
\end{lemma}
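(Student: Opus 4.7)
The plan is to combine the Grimmett-Marstrand slab construction with a Pisztora-style block renormalization, reducing the lemma to a statement about highly supercritical two-dimensional percolation whose crossing and uniqueness properties are so robust that closing the edges adjacent to any $M$ sites cannot spoil them. First I would apply Grimmett-Marstrand \cite{grimar90} to fix $\ell=\ell(\ep)$ large enough that $(p_c+\ep)$-percolation restricted to the slab $S_\ell$ is supercritical and, moreover, the finite-size box-crossing probabilities inside $S_\ell$ tend to $1$ as the boxes grow.

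Next I would coarse-grain at a scale $k$, to be chosen large. Partition the first two coordinates into squares of side $k$, producing blocks $B_i$ of shape $k\times k\times(2\ell+1)^{d-2}$, and declare $B_i$ \emph{good} if inside the enlarged slab-region $3B_i$ there is a cluster that (a) crosses each of the two pairs of opposite lateral faces of $B_i$, and (b) contains every cluster of diameter at least $k/2$ lying in $3B_i$. Standard arguments (Grimmett-Marstrand for (a), Antal-Pisztora type finite-scale uniqueness for (b)) ensure that the probability of being good can be made $\ge 1-\delta$ for any preassigned $\delta>0$ by taking $k$ large. Since \emph{good} is a local event with bounded range of dependence, Liggett-Schonmann-Stacey domination shows that the set of good blocks stochastically dominates an independent Bernoulli site percolation on $\bbZ^2$ of parameter $1-\delta'$, with $\delta'\downarrow 0$ as $\delta\downarrow 0$.

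Given $M$, I would then choose $L$ so large that the box $B_x(3L)$ corresponds in the coarse-grained picture to an $N\times N$ square with $N\gg M$, and $\delta$ so small that $(1-\delta')$-percolation on that square produces, with probability at least $1-\eta$, both many disjoint annular circuits of open blocks between $\partial B_x(L)$ and $\partial B_x(3L)$ and a unique open cluster of diameter $>L/(2k)$. Two deterministic observations then yield $\calB(x,\ell,L,M)$ for every $S\subset B_x(3L)$ of size $M$: (i) any chain of adjacent good blocks avoiding $S$ carries an open path in $\omega^S\cap S_\ell$, because the defining cluster of each good block lies inside $3B_i$ and so survives the removal of the at most $M$ blocks meeting $S$; (ii) two clusters of diameter $>L$ in $\omega^S\cap S_\ell\cap B_x(3L)$ would require two disjoint such chains of length $\gtrsim L/k$, which is ruled out by the uniqueness property built into the good-block definition as soon as $M\ll N$. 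A union bound over the finitely many relevant failure events completes the argument.

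The main obstacle is formulating the renormalization event so that it is simultaneously local, occurs with probability tending to $1$, and deterministically enforces uniqueness of the large cluster \emph{after an adversarial deletion} of $M$ blocks. The standard remedy of demanding the uniqueness property in the enlarged region $3B_i$, so that neighbouring good blocks force their internal clusters to merge and these mergers cannot all be broken by the $M$ removed sites, handles this; after that, only routine supercritical renormalization estimates remain.
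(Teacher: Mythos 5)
Your approach is genuinely different from the paper's, and I believe there is a gap in the uniqueness step. You aim for a coarse-grained event $G$, of high probability, that \emph{deterministically} implies $\calB(x,\ell,L,M)$ for every admissible $S$. For the crossing this is plausible: with many disjoint coarse crossings, at least one avoids the $O(M)$ blocks whose enlarged region $3B_i$ meets $S$. But for uniqueness the deterministic step fails as written: the good-block property merges all large clusters inside $3B_i$, yet that merging may be carried precisely by edges incident to $S$, so after passing to $\omega^S$ a previously good block need no longer merge the clusters passing through it. You then have to show that deleting $O(M)$ blocks from the unique coarse-grained giant cluster cannot split it into two pieces each of diameter $\gtrsim L/k$. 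This is a ``fault-tolerance'' property of highly supercritical two-dimensional percolation which is true but not automatic: it needs a separate argument, typically another Peierls estimate showing that a cut set at that scale has length $\gtrsim N$, hence uses $\gtrsim N-M$ pre-existing bad blocks, an event of probability $\lesssim (C\delta')^{N-M}$, followed by a union bound over placements of $S$. The sentence ``ruled out by the uniqueness property built into the good-block definition as soon as $M\ll N$'' conceals exactly this missing piece, and your closing remark that demanding uniqueness in $3B_i$ ``handles this'' does not actually resolve it.

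The paper sidesteps the deterministic reduction entirely. It first shows $\bbP_{p_c+\ep}(\neg E)\le \exp(-cL)$ for the \emph{unmodified} configuration (your first renormalization step, done at scale $\ell$ and combined with Liggett--Schonmann--Stacey and a Peierls bound), and then applies a finite-energy / Radon--Nikodym estimate: for any fixed $S$ with $|S|=M$, at most $2dM$ edges differ between $\omega$ and $\omega^S$, so $\bbP_{p_c+\ep}(\omega^S\notin E)\le (1-p_c-\ep)^{-2dM}\,\bbP_{p_c+\ep}(\neg E)$. A union bound over the at most $(6L+1)^{dM}$ choices of $S$ then yields $(1-p_c-\ep)^{-2dM}(6L+1)^{dM}e^{-cL}$, which tends to $0$ as $L\to\infty$ because the exponential decay beats the polynomial of degree $dM$ in $L$. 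This avoids all combinatorics of adversarial deletion at the coarse scale and is considerably shorter. Your route could likely be completed, but it would end up re-proving a fault-tolerance estimate that the paper's finite-energy trick renders unnecessary.
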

\begin{proof}
For a given $\ell$ and $L$ denote by $E=E(x,\ell,L)$ the event that:
\begin{enumerate}
\item There is a crossing from $\partial B_x(L)$
  to $\partial B_x(3L)$ in $S_\ell$.
\item There is exactly one cluster in $S_\ell\cap
  B_x(3L)$ of radius larger than $L$.
\end{enumerate}
Shortly, the event $E$ is just $\calB$ without the set $S$, or, if you
want, $\calB$ is the event that $E$ occurred in $\omega^S$ for
all $S$ with $|S|\le M$. 

We claim that for $\ell$ sufficiently large, $\bbP_{p_c+\ep}(\neg E)\le \exp(-cL)$ for
some $c=c(\ep,\ell)>0$ independent of $L$. Finding
such an $\ell$ is a standard exercise in renormalization theory, but
let us give a few details nonetheless.
Call a box of side-length $2\ell+1$ {\em good} if it contains crossings between opposite faces in all directions, and if all clusters of diameter at least $\frac 14 \ell$ connect inside the box. By choosing $\ell$ large, we can require that a box is good with arbitrarily high probability (see e.g.\ the appendix of \cite{BBHK08}).
Considering such boxes centered around the sites in $\ell\bbZ^2$. The
events that these boxes are good are 2-dependent (in the sense of
\cite{LSS97}, i.e.\
disjoint boxes are good independently), and
hence by \cite{LSS97}, if the probability that a box is good is sufficiently
large, then the good boxes stochastically dominate two-dimensional site percolation at density, say, $\frac 9{10}$. Now, a cluster of good boxes contains
a cluster in the underlying percolation, 
since the crossings of adjacent boxes must intersect. This means
that if either of the conditions in the definition of $E$ fail, then
there is an $\ell_\infty$-cluster of bad boxes containing at least $L/\ell$ boxes. (Here an $\ell_\infty$-cluster refers to a maximal set of connected sites with respect to $\ell_\infty$-distance, as opposed to $\ell_1$-distance used elsewhere.) But the
probability for that, from Peierls' argument, is at most $(8/10)^{L/\ell}\cdot
(6L/\ell)^2$. This shows the claim.

Fix $M>0$. Let $F_M$ be the set of configurations in $B_x(3L)$ for which there exists $S\subset B_x(3L)$ with $|S|=M$ and $\omega^S\not\in E$. We have
 \begin{align*}\bbP_{p_c+\ep}(F_M)&\le \sum_{S\subset B_x(3L):\,|S|=M}\bbP_{p_c+\ep}(\omega^S\not\in E)\\
 &\le \sum_{S\subset
     B_x(3L):\,|S|=M}(1-p_c-\ep)^{-2dM}\,\bbP_{p_c+\ep}(\neg E)\\
 &\le (1-p_c-\ep)^{-2dM}(6L+1)^{dM}\,\bbP_{p_c+\ep}(\neg E)\\
 &\le(1-p_c-\ep)^{-2dM}(6L+1)^{dM}\exp(-cL).\end{align*}
 For $L$ large enough, this quantity is smaller than $\eta$. The lemma follows from the fact that if $\omega\notin\calB(x,\ell,L,M)$, then there exists $S\subset B_x(3L)$ with $|S|=M$ and $\omega^S\not\in E$, i.e.\ $\omega\in F_M$.
\end{proof}

In order to prove Theorem~\ref{main} and~\ref{thm:thinthinner}, we will use Lemma~\ref{lem:2} to construct an infinite cluster at density $p_c+\ep$, and Lemma~\ref{lem:1} to make sure that the infinite cluster present at the lower density $p$ does not interfere too much with this construction.

\begin{proof}[Proof of Theorems \ref{main} and \ref{thm:thinthinner}]Recall the notations $\omega_p$,
  $\tilde\omega_p$ and $\omega'_\ep$ from page \pageref{pg:def}. We need to show that for any $\ep>0$,
  there exists $p>p_c$ such that $\tilde\omega_{p,\ep}$ has an
  infinite component. Note that $(\omega_{p_c}\cup\omega'_\ep)\setminus \calC_\infty(\omega_{p})$ is stochastically dominated by $\tilde \omega_{p,\ep}$. Thus, it suffices to show that for every $\ep>0$, there is $p>p_c$ such that $\omega_{p_c+\ep}\setminus \calC_\infty(\omega_{p})$ contains an infinite component. That is, Theorem~\ref{main} follows from Theorem~\ref{thm:thinthinner}, and it suffices to prove the latter.
  
  Let therefore $\ep>0$.
Fix $\eta>0$ such that $1-2\eta$ exceeds the critical parameter for any $8$-dependent percolation on vertices of $\bbZ^2$. Define successively $\ell,M,L$ and $p$ as follows. Fix $\ell=\ell(\ep,\eta)>0$ as defined in Lemma~\ref{lem:2}. Pick $M=M(\eta,\ell)>0$ as defined in Lemma~\ref{lem:1}. This defines $L=L(\eta,\ep,\ell,M)>0$ by Lemma~\ref{lem:2}, and then $p=p(\eta,\ell,M,L)>p_c$ by Lemma~\ref{lem:1}.

Let $\mathbf P$ denote the joint law of $(\omega_p,\omega_{p_c+\ep})$ under the increasing coupling described above. A site $x\in L\bbZ^2$ is said to be {\em good} if $\omega_p\in\calA(x,\ell,L,M)$ and $\omega_{p_c+\ep}\in\calB(x,\ell,L,M)$. By definition, 
$$\mathbf P\big[\calA(x,\ell,L,M)\cap\calB(x,\ell,L,M)\big]\ge 1-2\eta.$$
Since these events depend on edges in $B_x(4L)$ only, the site percolation (on $L\bbZ^2$) thus obtained is 8-dependent. As a consequence, there exists an infinite cluster of good sites on the coarse grained lattice $L\bbZ^2$. 

On the event that there exists an infinite cluster of good sites on the coarse grained lattice, there exists an infinite path in 
$\omega_{p_c+\ep}\setminus \calC_\infty(\omega_{p})$.
Indeed, by induction, consider a path of adjacent good sites $x_1,\dots,x_n$. Consider $C_i$ to be a cluster in 
$$[\omega_{p_c+\ep}\setminus
  \calC_\infty(\omega_{p})]\cap[B_{x_i}(3L)\setminus B_{x_i}(L)]$$ of
radius larger than $L$. By the definition of $\calA$ there are at most
$M$ sites in the box $S_l\cap B_{x_i}(3L)$ connected to distance
$L$ in $\omega_p$. Hence the same box also contains no more than $M$ sites in
$\calC_\infty(\omega_p)$ since any site connected to infinity must be
connected to distance $L$. Using the definition of $\calB$ with $S$
being exactly $\calC_\infty(\omega_p)\cap S_l\cap B_{x_i}(3L)$ we see
that $\omega_{p_c+\ep}\setminus\calC_\infty(\omega_p)$ contains a
crossing cluster for the box $S_l\cap B_{x_i}(3L)$ with all the
properties listed before Lemma \ref{lem:2}. In particular, the
uniqueness property ensures two such crossing clusters in two
neighbouring boxes must intersect. The result follows readily.
\end{proof}


\newcommand{\noopsort}[1]{}\def\cprime{$'$}
\providecommand{\bysame}{\leavevmode\hbox to3em{\hrulefill}\thinspace}
\providecommand{\MR}{\relax\ifhmode\unskip\space\fi MR }
\providecommand{\MRhref}[2]{%
  \href{http://www.ams.org/mathscinet-getitem?mr=#1}{#2}
}
\providecommand{\href}[2]{#2}

\begin{flushright}
\footnotesize\obeylines
  \textsc{IMPA}
  \textsc{Rio de Janeiro, Brazil}
  \textsc{E-mail:} \texttt{ahlberg@impa.br}
 \medbreak
  \textsc{Universit\'e de Gen\`eve}
  \textsc{Gen\`eve, Switzerland}
  \textsc{E-mail:} \texttt{hugo.duminil@unige.ch}
 \medbreak
    \textsc{Weizmann Institute}
  \textsc{Rehovot, Israel}
  \textsc{E-mail:} \texttt{gady.kozma@weizmann.ac.il}
\medbreak
    \textsc{IMPA}
  \textsc{Rio de Janeiro, Brazil}
  \textsc{E-mail:} \texttt{vladas@impa.br}
\end{flushright}
 
\end{document}